\documentclass[12pt]{amsart}
\usepackage{amscd}
\usepackage{amsmath}
\usepackage{hyperref}
\usepackage{hyperref}
\usepackage{tikz-cd}

\usepackage{amssymb}

\numberwithin{equation}{section}

\newtheorem{thm}[equation]{Theorem}
\newtheorem{prop}[equation]{Proposition}

\newtheorem{lemma}[equation]{Lemma}

\theoremstyle{definition}

\newtheorem{example}[equation]{Example}

\newtheorem{notation}[equation]{Notation}

\newcommand{\Gal}{\mathop{\mathrm{Gal}}}

\newcommand{\Th}{\mathop{\mathrm{Th}}}

\newcommand{\id}{\mathrm{id}}

\newcommand{\Z}{\mathbb{Z}}

\newcommand{\PP}{\mathbb{P}}
\newcommand{\Q}{\mathbb{Q}}

\newcommand{\R}{\mathbb{R}}

\newcommand{\Spec}{\operatorname{Spec}}

\newcommand{\Hom}{\operatorname{Hom}}

\newcommand{\Sym}{\operatorname{Sym}}

\renewcommand{\phi}{\varphi}

\newcommand{\WR}{\mathcal{R}}

\DeclareMathAlphabet{\cat}{OT1}{cmss}{m}{sl}

\title
[Weil Restriction, normal bundles and Motivic Thom spaces]
{Weil Restriction, normal bundles \\and Motivic Thom spaces }

\keywords
{Weil restriction; Motivic homotopy theory; Normal bundles; Thom spaces.
{\em Mathematical Subject Classification (2020):}
14F42}

\author
{Guangzhao Zhu}

\address
{Mathematical \& Statistical Sciences \\
University of Alberta \\
Edmonton
\\
CANADA}

\email
{guangzha@ualberta.ca}

\date
{\today}

\begin{document}

\begin{abstract}
Recent developments in motivic homotopy theory, particularly the construction of norm functors by Bachmann and Hoyois, have revealed deep connections between algebraic geometry and homotopy-theoretic structures. In this paper, we investigate certain geometric aspects of norm functors through the Weil restriction of schemes, which underlies these constructions. 

We show that Weil restriction preserves vector bundles and extend existing results concerning normal bundles. We then relate the Weil restriction to norm functors and, using a result of Bachmann and Hoyois, establish its compatibility with motivic Thom spaces. Finally, in the setting of motivic cohomology with rational coefficients, we prove that the Weil restriction map agrees with the norm map induced by a norm functor and that it preserves Thom classes.
\end{abstract}

\maketitle

\tableofcontents

\addtocounter{section}{0}

\section{Introduction}
The study of the motivic homotopy category originates from the pioneering work of Voevodsky and Morel \cite{MR1813224}, who introduced and systematically developed the notion of $\mathbb{A}^1$-homotopy for schemes. This framework allows one to study algebraic geometry using methods analogous to those of classical homotopy theory in topology. In particular, it provides a bridge between algebraic geometry and homotopical algebra, leading to powerful tools such as motivic cohomology, which plays a central role in the proof of the Bloch--Kato conjecture.

Analogous to the construction of the classical stable homotopy category in topology, a formal stabilization procedure—achieved by inverting the suspension with respect to the \emph{Tate object}—gives rise to the \emph{motivic stable homotopy category}. Within this stabilized category, diverse cohomology theories are naturally represented by motivic spectra, thereby providing a unified framework for studying generalized cohomology in algebraic geometry.

A prominent recent development in this area is the construction of norm functors on the motivic stable homotopy category for finite étale morphisms by Bachmann and Hoyois \cite{MR4288071}. These functors are symmetric monoidal and hence behave well with respect to multiplicative structures, in particular when passing to cohomology theories. 

Of particular importance is that they provide a conceptual explanation for various descent phenomena in motivic cohomology theories; see, for example, \cite{MR1809664} and \cite{WMC}.

While their comprehensive construction relies heavily on the machinery of $\infty$-categories, the underlying geometric operation behind these norm functors is the classical Weil restriction of schemes.

In this paper, we adopt a purely geometric approach to study the Weil restriction of vector bundles and normal bundles, and relate these constructions to motivic Thom spaces. This perspective highlights the geometric content underlying norm-type constructions without invoking highly abstract categorical machinery. Our approach is based on the Weil restriction map of motivic cohomology groups constructed by the author in \cite{WMC}.

We organize the paper as follows. In \S2, we recall basic properties of the Weil restriction of schemes and its relation to the norm functors constructed in the motivic stable homotopy category. In \S3, we study the Weil restriction of vector bundles and extend existing results on the Weil restriction of normal bundles. In \S4, we relate the Weil restriction of schemes to norm functors at the level of the unstable motivic homotopy category and Voevodsky's category of motives, and prove its compatibility with Thom spaces. Finally, in the setting of motivic cohomology with rational coefficients, we show that the Weil restriction map coincides with the norm map induced by norm functors and prove that it sends Thom classes to Thom classes.

\medskip
\noindent{\bf Notation and conventions.}

Throughout this paper, schemes are assumed to be separated and Noetherian. 

A \emph{variety} means an integral scheme, i.e., an irreducible and reduced scheme.

For any scheme $S$, we write $\textbf{Sm}_{S}$ for the category of smooth schemes over $S$.

For a finite étale morphism $f:S \to T$, we denote by $N_{f}$ the associated norm functor between the relevant motivic categories. 

We also write $N_{f}^{H}$ for the induced norm map on motivic cohomology groups.

\medskip
\noindent{\bf Acknowledgements.}
The author is grateful to Prof.~Xi Chen for helpful discussions and suggestions related to this project. The author is also grateful to Prof.~Stefan Gille for introducing the question.

\section{Weil restriction and norm functors}
\subsection{Basic properties of Weil restriction}
We briefly recall the definition and basic properties of the {\em Weil restriction} of schemes. For a comprehensive treatment, we refer the reader to \cite{MR1045822} or \cite{MR4480537}.
Let $f:S\to T$ be a morphism of schemes. For a scheme $X$ over $S$, the {\em Weil restriction} of $X$ along $f$, denoted by $\WR_f(X)$ (or $\WR_{S/T}(X)$ depending on the context, or simply $\WR(X)$ when $f$ is clear from the context), is a scheme (if exists) over $T$ characterized by the following universal property:
\[
\Hom_T(Y,\WR_f(X)) \cong \Hom_S(Y\times_T S, X)
\]
for every $T$-scheme $Y$.

For certain schemes, the existence of the Weil restriction is ensured by the {\em representability criterion} (See \cite[Theorem 4, \S7.6.~1]{MR1045822}). In particular, the Weil restriction exists whenever $X$ is quasi-projective over $S$. Moreover, the Weil restriction of a projective scheme remains projective; the Weil restriction of a quasi-projective scheme remains quasi-projective \cite[Proposition 2.10]{MR4480537}.

\begin{notation}
Unless otherwise specified, when we speak of the Weil restriction of schemes, we implicitly assume that it exists. The reader may keep in mind the case of quasi-projective schemes, where existence is guaranteed.
\end{notation}

\begin{example}
\label{ee}
Let $f:\Spec(L) \to \Spec(k)$ be the morphism corresponding to the Galois extension $L/k$ of degree $d$. Then Weil restriction sends affine space $\mathbb{A}^{n} _{L}$ to affine spaces $\mathbb{A}^{n\cdot d}_{k}$. In particular, it sends $\mathbb{A}^{0}_{L} \to \mathbb{A}^{0}_{k}$. This can be checked easily from the definition (See the proof of \cite[Theorem 4, \S 7.6]{MR1045822}).
But for projective spaces, the similar statement is not true in general. For example, we may consider the field extension $\R \hookrightarrow\mathbb{C}$. Then $\WR_{\mathbb{C}/\mathbb{R}}(\PP^{1}_{\mathbb{C}})$ is not isomorphic to $\PP^{1}_{\mathbb{R}}$.
\end{example}

We recall the main properties of the Weil restriction that will be used later.
The first proposition concerns the compatibility of Weil restriction with base change.
\begin{prop}\cite[Proposition 2.2]{MR4480537}
\label{p2}
    Let $f:S \to T$ be a finite étale morphism and $X$ be an $S$-scheme. Then we have
    \begin{itemize}
        \item[(1)] For a $T$-scheme $T_{i}$ and $T_{i}^{'}:=T_{i}\times_{T}S$, we have an isomorphism \[\WR_{T^{'}_{i}/T_{i}}(X\times_{S}T_{i} ^{'})\cong \WR_{S/T}(X)\times_{T} T_{i}.\]
        \item[(2)] Given morphisms of $S$-schemes $X\to Z$ and $Y \to Z$, we have an isomorphism \[\WR_{S/T}(X\times_{Z}Y)\cong \WR_{S/T}(X)\times_{\WR_{S/T}(Z)}\WR_{S/T}(Y).\]
    \end{itemize}
\end{prop}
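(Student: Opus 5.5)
Both statements are best proved by the Yoneda lemma: I will show that the two sides represent the same functor on $T_i$-schemes (for (1)) or on $T$-schemes (for (2)), naturally in the test scheme. Existence of every Weil restriction involved is assumed, as in the Notation above. Since all objects are then representable, a natural bijection of $\Hom$-sets gives the required isomorphism.

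For (1), let $Y$ be a $T_i$-scheme. A $T_i$-morphism $Y\to \WR_{S/T}(X)\times_T T_i$ is the same as a $T$-morphism $Y\to \WR_{S/T}(X)$, where $Y$ is regarded as a $T$-scheme through $T_i\to T$. By the universal property this corresponds to an $S$-morphism $Y\times_T S\to X$. Transitivity of fibre products gives
\[
Y\times_T S\cong Y\times_{T_i}(T_i\times_T S)=Y\times_{T_i}T_i',
\]
and this identification is compatible with the projections to $S$. Hence an $S$-morphism $Y\times_{T_i}T_i'\to X$ is the same as a $T_i'$-morphism $Y\times_{T_i}T_i'\to X\times_S T_i'$: one pairs it with the projection to $T_i'$. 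By the universal property of $\WR_{T_i'/T_i}$, these in turn correspond to $T_i$-morphisms $Y\to \WR_{T_i'/T_i}(X\times_S T_i')$. Every step is natural in $Y$, so Yoneda yields the isomorphism in (1). Finite étaleness of $f$ is only needed for existence, and it is stable under base change, so $T_i'\to T_i$ is again finite étale.

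For (2), let $Y$ be a $T$-scheme and write $Y_S=Y\times_T S$. A $T$-morphism from $Y$ to $\WR(X)\times_{\WR(Z)}\WR(Y)$ is a pair of $T$-morphisms $Y\to\WR(X)$ and $Y\to\WR(Y)$ with the same composite to $\WR(Z)$. The universal property turns this into a pair of $S$-morphisms $Y_S\to X$ and $Y_S\to Y$ whose composites to $Z$ agree. Here I use that the bijection $\Hom_T(Y,\WR(-))\cong\Hom_S(Y_S,-)$ is natural in the $S$-scheme argument; this holds because the morphism $\WR(X)\to\WR(Z)$ is itself defined through this naturality. Such a compatible pair is exactly an $S$-morphism $Y_S\to X\times_Z Y$, which corresponds to a $T$-morphism $Y\to\WR(X\times_Z Y)$. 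Naturality in the test scheme again gives the isomorphism.

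The only point needing care, and so the main obstacle, is checking naturality in both variables: compatibility with composition in the test scheme and with morphisms of the target. Both reduce to the functoriality of $Y\mapsto Y\times_T S$ and of postcomposition, so they are routine.
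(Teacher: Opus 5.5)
Your Yoneda argument is correct and is the standard proof of this formal statement; the paper gives no proof of its own and simply cites \cite[Proposition~2.2]{MR4480537}. One cosmetic fix: in (2) you use $Y$ both for the test scheme and for the $S$-scheme $Y\to Z$, so rename the test scheme (e.g.\ $W$, with $W_S=W\times_T S$). Note also that your argument shows the left-hand sides exist whenever the right-hand sides do, so those existence assumptions are unnecessary.
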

The second proposition concerns properties of morphisms that are stable under Weil restriction.
\begin{prop}\cite[Proposition~2.6]{MR4480537}
\label{pp}
Let $f:S \to T$ be a finite étale morphism of schemes. For any morphism 
$i:X \to Y$ of $S$-schemes, we denote by 
\[
\WR(i):\WR(X)\to \WR(Y)
\]
the induced morphism. Then the following hold:
\begin{itemize}
    \item[(1)] If $i$ is an open embedding, then $\WR(i)$ is also an open embedding.
    \item[(2)] If $i$ is a closed embedding, then $\WR(i)$ is also a closed embedding.
    \item[(3)] If $i$ is smooth, then $\WR(i)$ is also smooth.
    \item[(4)] If $i$ is étale, then $\WR(i)$ is also étale.
\end{itemize}
\end{prop}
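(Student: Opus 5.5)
The plan is to check all four statements étale-locally on $T$ and then use the functor of points. Since $f$ is finite étale, there is an étale surjective cover $\{T_i \to T\}$ with $S\times_T T_i \cong \coprod_{j=1}^{d} T_i$ (split over each $T_i$). Each of the properties open embedding, closed embedding, smooth and étale is fpqc-local on the target, and in particular étale-local. By Proposition~\ref{p2}(1), the base change of $\WR_{S/T}(i)$ along $T_i \to T$ identifies with $\WR_{T_i'/T_i}(i\times_S T_i')$. So it suffices to treat the split case $S=\coprod_{j=1}^d T$.

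In the split case, write $X=\coprod_j X_j$ and $Y=\coprod_j Y_j$ with $X_j,Y_j$ over the $j$-th copy of $T$, and $i=\coprod_j i_j$. The universal property gives
\[
\Hom_T(Z,\WR(X))\cong \Hom_S\Bigl(\coprod_j Z, X\Bigr)\cong \prod_j \Hom_T(Z,X_j),
\]
hence $\WR(X)\cong X_1\times_T\cdots\times_T X_d$, naturally in $X$. Under this identification $\WR(i)=i_1\times_T\cdots\times_T i_d$. Each of the four classes is stable under base change and composition. Therefore each is stable under finite fibre products over $T$, since $i_1\times i_2=(i_1\times \id)\circ(\id\times i_2)$ and each factor is a base change. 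This proves (1)–(4).

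The main obstacle is the descent step. One must make sure the splitting cover exists: this uses the standard fact that a finite étale morphism of constant degree $d$ is étale-locally trivial, after decomposing $T$ into the open and closed pieces where the degree is constant. One must also make sure the identification of Proposition~\ref{p2}(1) is compatible with the morphism $\WR(i)$, that is, natural in $X$. That naturality follows directly from the universal property, because both sides represent the same functor on $T_i$-schemes.
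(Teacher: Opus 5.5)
Your proof is correct. The paper does not prove this proposition; it quotes it from \cite{MR4480537} and uses it as a black box, so your argument is a self-contained replacement rather than a reconstruction of an internal proof.

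Your steps are: split $f$ étale-locally on $T$, commute $\WR$ with that base change via Proposition~\ref{p2}(1), identify $\WR(i)$ in the split case with $i_1\times_T\cdots\times_T i_d$, and descend. This is the same mechanism the paper itself uses in the proof of Proposition~\ref{pppp1}. There, the decomposition $\WR(X)\times_T S\cong X\times R'(X_{S'})$ is a one-step version of your splitting, followed by faithfully flat descent.

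One detail is worth making explicit. The covering you descend along is really $\{\WR(Y)\times_T T_i\to \WR(Y)\}$, a covering of the target of $\WR(i)$, and the base change of $\WR(i)$ along it is $\WR(i)\times_T T_i$. This is what lets you invoke fpqc-locality on the base for the four properties.

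Compared with the classical route (cf.\ \cite[\S 7.6]{MR1045822}), your approach has a clear strength and a clear cost. Its strength is that it handles all four properties uniformly, using only stability under base change and composition. Its cost is that it uses étaleness of $f$ essentially, to obtain the splitting, and imports the descent lemmas.

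The classical route works directly with the functor of points. For $U\subset Y$ open (resp.\ closed), pulling $\WR(U)$ back along a point $Z\to\WR(Y)$ gives a subfunctor cut out by an open (resp.\ closed) condition on $Z$. This holds because $Z\times_T S\to Z$ is finite (resp.\ finite locally free). Smoothness and étaleness then follow from the infinitesimal lifting criterion, since $Z_0\times_T S\hookrightarrow Z\times_T S$ is still a square-zero thickening of affine schemes, together with a limit argument for finite presentation. That route needs only $f$ finite locally free, but it requires a separate argument for each property.
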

\subsection{Unstable motivic homotopy category and norm functors}
\label{se11}
In this subsection, we briefly recall the norm functors of Bachmann--Hoyois and relate it to the Weil restriction of schemes. For detailed discussion, we refer the reader to \cite{MR4288071}.

Let \(Z\) be a scheme, and recall that \(\textbf{Sm}_Z\) denotes the category of smooth schemes over \(Z\). The {\em unstable motivic homotopy category} over \(Z\), denoted by \(\mathcal H(Z)\), was introduced by Morel--Voevodsky \cite{MR1813224}. It is obtained from simplicial presheaves on \(\textbf{Sm}_Z\) by imposing Nisnevich descent and \(\mathbb A^1\)-homotopy invariance. An object of \(\mathcal H(Z)\) is called a {\em motivic space}. Any smooth \(Z\)-scheme \(X\) determines a {\em representable motivic space}, again denoted by \(X\), via the Yoneda embedding
\[
X(Y):=\Hom_Z(Y,X)
\]
for every \(Y\in \textbf{Sm}_Z\). 

By adjoining a disjoint base point, one obtains the {\em pointed motivic space}
\[
X_+:=X\sqcup * .
\]
This leads to the {\em pointed unstable motivic homotopy category} over \(Z\), denoted by \(\mathcal H_\bullet(Z)\). Equipped with the smash product, this category admits a symmetric monoidal structure.

Let $f:S\to T$ be any morphism of schemes. For an \(S\)-scheme \(X\), recall that its Weil restriction  \(\WR(X)\), if it exists, is characterized by the universal property \[\operatorname{Hom}_T(U,\WR(X))\cong
\operatorname{Hom}_S(U\times_T S,X)\]
for every \(T\)-scheme \(U\). 

However, Weil restriction of schemes by itself is not sufficient for the purposes of
motivic homotopy theory. We highlight three related issues.

\begin{itemize}
    \item[(1)] \textbf{Pointed motivic spaces.} Motivic homotopy theory is not
    built merely from schemes, but from pointed motivic spaces obtained after
    imposing Nisnevich descent and \(\mathbb A^1\)-homotopy invariance. Thus one
    needs a construction that acts functorially on pointed motivic spaces, not only
    on schemes.

    \item[(2)] \textbf{Monoidal structure.} Norm functors are inherently
    multiplicative. In particular, over a split finite étale cover, the expected
    operation is given by the smash product in pointed motivic spaces. This
    monoidal structure is not encoded by the scheme-theoretic Weil
    restriction alone.

    \item[(3)] \textbf{Stabilization.} Cohomology theories in motivic homotopy
    theory are represented by motivic spectra. Hence one needs compatibility with
    stabilization, so that the construction passes from the pointed unstable
    motivic homotopy category to the stable motivic homotopy category.
\end{itemize}

To address these issues, when $f:S \to T$ is finite étale, Bachmann--Hoyois \cite[\S 3, {\em Norms of pointed motivic spaces}]{MR4288071}
construct an {\em unstable norm functor}
\[
N_f=f_\otimes:\mathcal H_\bullet(S)\longrightarrow \mathcal H_\bullet(T)
\] between the pointed unstable motivic homotopy categories. This functor may be viewed as a pointed motivic refinement of the classical Weil restriction. More precisely, if \(X\) is a smooth \(S\)-scheme for which the Weil restriction \(\WR(X)\) exists, then there is a natural equivalence
\[
N_f(X_+)\simeq \WR(X)_+ .
\]
Here \(X_+\) denotes the pointed motivic space obtained from the representable motivic space \(X\); similarly for \(\WR(X)_+\).

Moreover, Bachmann--Hoyois prove that the norm functor \(N_f\) is symmetric monoidal and compatible with stabilization. Consequently, it descends to a norm functor, again denoted by
\[
N_f=f_\otimes:\mathcal{SH}(S)\longrightarrow \mathcal{SH}(T),
\]
between the stable motivic homotopy categories (or, more precisely, the motivic stable \(\infty\)-categories) of motivic spectra; see \cite[\S 2--\S 5]{MR4288071}. Under stabilization, we similarly obtain a natural equivalence
\[
N_f(\Sigma^\infty X_+)\simeq \Sigma^\infty \WR(X)_+ .
\]

\subsection{Norm functors and motives}
\label{propnorm}
In this subsection, we pass from the unstable motivic setting to Voevodsky's
category of motives. Bachmann--Hoyois construct norm functors at the level of
motivic \(\infty\)-categories, where the higher coherence required by the norm
formalism is encoded. After passing to homotopy categories, the full higher-coherent
structure is no longer visible, but the induced functors and natural equivalences
remain. Moreover, the \(\infty\)-categorical equivalences become natural isomorphisms
in the usual triangulated categories of motives.

The passage to motives is achieved via the adding-transfers functor constructed by Bachmann--Hoyois:
\[
Z^{tr}:\mathcal{SH}^{\otimes}\longrightarrow \mathcal DM^{\otimes}.
\]
They prove that this functor is compatible with norm functors; see
\cite[\S 14.1, ``Norms of presheaves with transfers'']{MR4288071}.
Here \(\mathcal DM^{\otimes}\) denotes the symmetric monoidal \(\infty\)-category corresponding to Voevodsky's motives. In particular, for a finite étale morphism $f:S\to T$,
one obtains a norm functor between Voevodsky's categories of motives
\[
N_f=f_\otimes:DM(S,\mathbb Z)\longrightarrow DM(T,\mathbb Z).
\]

Moreover, this construction admits a version with coefficients in a commutative ring \(R\); see \cite[Remark~14.6]{MR4288071}. Thus we also obtain a norm functor
\[
N_f=f_\otimes:DM(S,R)\longrightarrow DM(T,R).
\]

The compatibility of \(Z^{tr}\) with the unstable norm functor implies that, for a
smooth \(S\)-scheme \(X\) such that the Weil restriction \(\WR(X)\) exists, the
unstable formula
\[
N_f(X_+)\simeq \WR(X)_+
\]
induces the motivic formula
\[
N_f(M_S(X))\cong M_T(\WR(X)).
\]
Here \(M_S(X)\) and \(M_T(\WR(X))\) denote the motives of \(X\) over \(S\) and
of \(\WR(X)\) over \(T\), respectively.

We now record the properties of \(N_f\) that will be used in the following sections.

First, the norm functor is compatible with base change. Consider a Cartesian square
\[
\begin{CD}
S' @>{g'}>> S\\
@V{f'}VV @VV{f}V\\
T' @>{g}>> T,
\end{CD}
\]
where \(f\) is finite étale. Then the Bachmann--Hoyois norm formalism gives a
natural isomorphism
\[
g^*N_f(A)\cong N_{f'}(g'^*A)
\]
for \(A\in DM(S,R)\); see \cite[Proposition~5.3]{MR4288071}. This base-change
compatibility is one of the consequences of encoding norm functors by the
span-category formalism.

Second, for a split finite étale morphism, the norm is the monoidal product. More
precisely, if
\[
\nabla:\coprod_{i=1}^d T\to T
\]
is the fold map, then the norm functor along \(\nabla\) is given by the \(d\)-fold
tensor product in \(DM(T,R)\):
\[
N_\nabla(A_1,\ldots,A_d)\cong A_1\otimes\cdots\otimes A_d.
\]
At the level of pointed motivic spaces, this is the corresponding \(d\)-fold smash
product (\cite[Theorem 3.3]{MR4288071}).

\section{Weil restriction, vector bundles and normal bundles}
\subsection{Weil restriction of vector bundles}
In this section, we study the behavior of vector bundles under Weil restriction. 
From the perspective of motivic homotopy theory, a vector bundle determines its {\em Thom space}, which serves as a fundamental object in the theory. Consequently, understanding the behavior of vector bundles under Weil restriction is a natural first step toward comparing the associated Thom spaces and relating Weil restriction to norm functors.

We show that, under suitable assumptions, the Weil restriction of a vector bundle is again a vector bundle. This result is known in the context of unstable motivic homotopy theory and norm functors. To emphasize the underlying geometric nature of the construction, we include a more or less elementary proof. We begin with the following simple lemma.
\begin{lemma}
\label{aa}
Let $f:X \to Y$ be a finite étale morphism of schemes, and let $\mathcal{F}$ be a locally free sheaf of finite rank on $X$. Then $f_{*}(\mathcal{F})$ is locally free of finite rank on $Y$.
\end{lemma}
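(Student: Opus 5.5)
The plan is to reduce the statement to a local algebra fact: a finitely generated flat module over a Noetherian ring is locally free. Since $f$ is finite, it is affine, so $f_*\mathcal{F}$ is quasi-coherent. Being locally free is local on $Y$, so we may assume $Y=\Spec(A)$. Then $X=\Spec(B)$ with $B$ a finite $A$-algebra, and $\mathcal{F}=\widetilde{M}$ for a $B$-module $M$. Thus $f_*\mathcal{F}$ corresponds to $M$ regarded as an $A$-module, and it suffices to show that $M$ is a finitely generated projective $A$-module (equivalently, finite flat over the Noetherian ring $A$).

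For finite generation, I would use that $\mathcal{F}$ is locally free of finite rank on the Noetherian scheme $X$, so $M$ is a finitely generated $B$-module. Since $B$ is finite over $A$, $M$ is then finitely generated over $A$.

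For flatness, note that $\mathcal{F}$ locally free means $M_{\mathfrak q}$ is free over $B_{\mathfrak q}$ for every prime $\mathfrak q$ of $B$; hence $M$ is flat over $B$. Since $f$ is étale, it is in particular flat, so $B$ is flat over $A$. Transitivity of flatness then gives that $M$ is flat over $A$: for an $A$-module $N$, we have $N\otimes_A M\cong (N\otimes_A B)\otimes_B M$, and both functors are exact. A finitely presented flat module over a local ring is free. Here $M$ is finitely presented because $A$ is Noetherian and $M$ is finitely generated. So $M_{\mathfrak p}$ is free of finite rank over $A_{\mathfrak p}$ for each prime $\mathfrak p$, and by finite presentation this freeness spreads to a neighborhood $D(g)$. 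This shows $f_*\mathcal{F}$ is locally free of finite rank.

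I expect no serious obstacle. The only care needed is the passage from stalkwise freeness to local freeness, which is where Noetherianity (hence finite presentation) is used. Note that only finiteness and flatness of $f$ are needed, not unramifiedness. If $Y$ is connected the rank is constant, equal to $\deg(f)\cdot \mathrm{rk}(\mathcal{F})$ when $\mathcal{F}$ has constant rank, but the statement does not require this.
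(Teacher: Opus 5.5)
Your proposal is correct and follows essentially the paper's route. Both arguments show that the stalks of $f_*\mathcal{F}$ are finitely generated flat modules over Noetherian local rings, hence free, and then spread freeness to a neighborhood; the paper does that last step by citing Hartshorne's exercise on coherent sheaves, where you use finite presentation, and you also spell out the flatness of $f_*\mathcal{F}$ via transitivity through $B$, which the paper leaves implicit.
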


\begin{proof}
By \cite[Exercise~5.7, Chapter~II]{MR463157}, it suffices to check the freeness of stalks of $f_{*}\mathcal{F}$. By reducing to the case that $Y$ is an affine scheme given by a Noetherian local ring, the lemma follows from the standard fact that any finitely generated flat module over a noetherian local ring is free.\end{proof}

We are now ready to prove the following proposition.
\begin{prop}
\label{tttt}
    Let $f:S \to T$ be a finite étale morphism of schemes and $X$ a scheme over $S$. Suppose that $\WR(X)$ exists. Then for any vector bundle $E \to X$, $\WR(E)$ exists and $\WR(E)\to \WR(X)$ remains a vector bundle.
\end{prop}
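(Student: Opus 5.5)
The plan is to identify $\WR(E)$ explicitly as the vector bundle attached to a pushforward sheaf on $\WR(X)$, and to verify this through the universal property. Set $W=\WR(X)$ and let $\pi:W\times_T S\to X$ be the counit, i.e. the $S$-morphism corresponding to $\mathrm{id}_W$. Write $p:W\times_T S\to W$ for the projection, which is finite étale as a base change of $f$. Let $\mathcal{E}$ be the locally free sheaf on $X$ with $E=\mathbb{V}(\mathcal{E}^\vee)$, so that $E$ represents $U\mapsto \Gamma(U,\mathcal{E}_U)$ over $X$. Define
\[
\mathcal{G}:=p_*\pi^*\mathcal{E}.
\]
By Lemma~\ref{aa}, $\mathcal{G}$ is locally free of finite rank on $W$. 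The candidate is $V:=\mathbb{V}(\mathcal{G}^\vee)\to W$, which is a vector bundle by construction. It then suffices to show that $V$ represents the functor $U\mapsto \Hom_S(U\times_T S,E)$ on $T$-schemes. This gives both the existence of $\WR(E)$ and the identification $\WR(E)\cong V$ over $\WR(X)$.

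For the representability, take a $T$-scheme $U$. An $S$-morphism $U\times_T S\to E$ is the same as an $S$-morphism $h:U\times_T S\to X$ together with a section of $h^*\mathcal{E}$ on $U_S:=U\times_T S$. The morphism $h$ corresponds to a unique $T$-morphism $u:U\to W$, and then $h=\pi\circ(u\times\mathrm{id}_S)$. Hence the datum is $u$ together with a section of $(u_S)^*\pi^*\mathcal{E}$ on $U_S$, where $u_S=u\times \mathrm{id}_S$. On the other side, a $U$-point of $V$ lying over $u$ is a section of $u^*\mathcal{G}=u^*p_*\pi^*\mathcal{E}$.

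The key step is the base change isomorphism
\[
u^*p_*\mathcal{F}\cong p_{U*}u_S^*\mathcal{F}
\]
for the cartesian square formed by $p$, $u$, and $p_U:U_S\to U$. This holds for $p$ finite (hence affine) and $\mathcal{F}$ quasi-coherent, by checking on affine opens of $W$, where it is just $M\otimes_A B\otimes_B B' = M\otimes_A B'$ with the appropriate rings. Taking global sections on $U$ gives
\[
\Gamma(U,u^*\mathcal{G})=\Gamma(U_S,u_S^*\pi^*\mathcal{E}),
\]
which is exactly the needed identification. One must also check naturality in $U$, so that these bijections assemble into an isomorphism of functors, and compatibility with the projection to $W$. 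Both follow because every map used is natural in $U$.

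The main obstacle is doing this bookkeeping cleanly: the two descriptions of the fibers must match functorially, and the $\mathbb{V}(\mathcal{G}^\vee)$ convention must be correct, namely that points are sections of $\mathcal{G}$ and not of its dual. A cleaner alternative to try if this gets messy is to work Zariski-locally. On an open $X_0\subset X$ where $E|_{X_0}\cong X_0\times\mathbb{A}^n$, Proposition~\ref{p2}(2) and Example~\ref{ee} (via the base-change form Proposition~\ref{p2}(1), since $\WR_{S/T}(\mathbb{A}^n_S)\cong \mathbb{A}^{nd}_T$ locally on $T$) give $\WR(E|_{X_0})\cong \WR(X_0)\times\mathbb{A}^{nd}$. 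Proposition~\ref{pp}(1) makes $\WR(X_0)\subset \WR(X)$ open. However, the sets $\WR(X_0)$ need not cover $\WR(X)$, since a point of $\WR(X)$ meets several sheets of $S$. This is exactly why the sheaf-theoretic construction with $p_*$ is the preferable route.
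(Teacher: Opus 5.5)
Your proposal is correct and essentially identical to the paper's proof. The paper likewise sets $\mathcal{J}=p_*q^*\mathcal{E}$ on $\WR(X)$ (locally free by Lemma~\ref{aa}), and identifies $\mathbf{Spec}(\Sym^*\mathcal{J}^\vee)$ with $\WR(E)$ using affine base change $u^*p_*q^*\mathcal{E}\cong p_{Z*}q_Z^*\mathcal{E}$ and Yoneda. Your version checks directly that $\mathbb{V}(\mathcal{G}^\vee)$ represents $U\mapsto\Hom_S(U\times_T S,E)$ on $T$-schemes, and this is slightly more careful: it actually establishes the existence of $\WR(E)$, whereas the paper appeals to ``the universal property of $\WR(E)$'' before existence has been shown.
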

\begin{proof}
    Let $\mathcal{E}$ be the associated sheaf of sections of $E$. Consider $\WR(X)_{S}:=\WR(X)\times  _{T}S$. Using the universal property of $\WR(X)$, we have a natural morphism $q:\WR(X)_{S}\to X$. Moreover, we have a finite étale morphism $p:\WR(X)_{S}\to \WR(X)$. Note that $q^{*}(\mathcal{E})$ is  a locally free sheaf  on $\WR(X)_{S}$. Applying Lemma \ref{aa}, we obtain that $p_{*}(q^{*}(\mathcal{E}))$ is a locally free sheaf on $\WR(X)$.

We set $\mathcal{J}:=p_{*}(q^{*}(\mathcal{E}))$ and claim that $\mathbf{Spec}(\Sym^{*}(\mathcal{J}^{\vee}))\cong \WR(E)$.

Indeed, Let $Z$ be an $\WR(X)$-scheme with structure morphism $u:Z\to \WR(X)$ and set $Z_S:=Z\times_T S$. By the universal property of relative spectrum, we have
\[
\Hom_{\WR(X)}\!\Bigl(Z,\mathbf{Spec}\bigl(\Sym^{*}(\mathcal J^\vee)\bigr)\Bigr)
\cong \Gamma(Z,u^*\mathcal J).
\]
By base change for the finite étale morphism $p:\WR(X)_{S} \to \WR(X)$, we have the following commutative diagram
\[
\begin{CD}
Z\times_{\WR(X)}\WR(X)_S @>{p_Z}>> Z \\
@VVV                      @V{u}VV \\
\WR(X)_S @>{p}>> \WR(X) \\
@V{q}VV \\
X
\end{CD}
\] We define $q_{Z}$ to be the composition of $Z\times_{\WR(X)}\WR(X)_S \to \WR(X)_{S}\to X$.
A finite morphism is affine, thus we have $u^*\mathcal J \cong p_{Z*}(q_Z^*\mathcal E)$. This shows that \[\Gamma(Z,u^*\mathcal J)\cong \Gamma(Z,p_{Z*}(q_Z^*\mathcal E))\cong \Gamma(Z_S,q_Z^*\mathcal E).\] Since $E$ is the vector bundle associated to $\mathcal E$, the latter group is identified with $\Hom_{X}(Z_S,E)$. 

On the other hand, by the universal property of $\WR(E)$,
\[
\Hom_{\WR(X)}(Z,\WR(E))\cong \Hom_X(Z_S,E).
\]
Thus
\[
\Hom_{\WR(X)}\!\Bigl(Z,\mathbf{Spec}\bigl(\Sym^{*}(\mathcal J^\vee)\bigr)\Bigr)
\cong
\Hom_{\WR(X)}(Z,\WR(E))
\]
functorially in $Z$, and the claim follows from Yoneda.
\end{proof}

\subsection{Weil restriction of normal bundles}
Normal bundles, as a special class of vector bundles, are closely related to regular embeddings in algebraic geometry. 
In the previous subsection, we showed that, under suitable assumptions, the Weil restriction of a vector bundle remains a vector bundle. 
A natural question is whether the Weil restriction of a normal bundle behaves compatibly with the corresponding closed embedding.

In fairly general situations, we give a positive answer to this question.
The result may be viewed as a generalization of a theorem originally due to Zhu and Karpenko \cite{MR4949893}. A careful inspection of the proof shows that the same argument applies in the present setting without essential modification.

Recall our convention that a scheme is separated and Noetherian. 
By Proposition~\ref{pp}, a closed embedding between smooth schemes (which is, in particular, a regular closed embedding) remains a closed embedding between smooth schemes (still, a regular closed embedding) under Weil restriction. 
Hence we have the following proposition.
\begin{prop}
\label{pppp1}
Let $f:S\to T$ be a \textbf{surjective} finite étale morphism of schemes and $i: Z \hookrightarrow X$ be a closed embedding of smooth $S$-schemes. We let $N_{Z/X}$ denote the associated normal bundle. Then there is an isomorphism
\[
\WR(N_{Z/X}) \cong N_{\WR(Z)/\WR(X)},
\]
where $N_{\WR(Z)/\WR(X)}$ denotes the normal bundle associated to the induced closed embedding
\[
\WR(i):\WR(Z)\hookrightarrow \WR(X)
\]
\end{prop}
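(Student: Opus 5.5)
The plan is to identify both sides with the relative spectrum of a symmetric algebra on $\WR(Z)$ and compare the underlying locally free sheaves. By Proposition~\ref{tttt} (applied with $X$ replaced by $Z$), $\WR(N_{Z/X})$ is the vector bundle on $\WR(Z)$ whose sheaf of sections is $p_*q^*\mathcal{N}$. Here $\mathcal{N}=N_{Z/X}$ is regarded as the locally free sheaf of sections, $p:\WR(Z)_S\to \WR(Z)$ is the finite étale projection, and $q:\WR(Z)_S\to Z$ is the counit morphism. On the other side, by Proposition~\ref{pp}, $\WR(i)$ is a closed embedding of smooth $T$-schemes. Hence $N_{\WR(Z)/\WR(X)}$ is the vector bundle attached to the dual of the conormal sheaf $\mathcal{I}'/\mathcal{I}'^2$ of $\WR(i)$. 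It therefore suffices to produce a natural isomorphism of locally free sheaves
\[
p_*q^*N_{Z/X}\;\cong\; N_{\WR(Z)/\WR(X)}
\]
on $\WR(Z)$.

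Since all schemes are smooth, I would compute normal bundles through tangent bundles, using the exact sequence $0\to T_Z\to i^*T_X\to N_{Z/X}\to 0$, which is valid for a closed embedding of smooth schemes. The key step is the formula
\[
T_{\WR(Y)/T}\cong p_*q^*T_{Y/S}
\]
for a smooth $S$-scheme $Y$. I would prove it by the same Yoneda argument as in Proposition~\ref{tttt}. A $\WR(Y)$-morphism $W\to T_{\WR(Y)}$ is a lift of $W\to\WR(Y)$ to $W[\epsilon]=W\times\Spec\mathbb{Z}[\epsilon]/(\epsilon^2)$. By the universal property and the identity $W[\epsilon]\times_T S=W_S[\epsilon]$, this is the same as a lift of $W_S\to Y$ to $W_S[\epsilon]$, that is, a section of $q_W^*T_Y$ over $W_S$. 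This in turn equals $\Gamma(W,u^*p_*q^*T_Y)$ by affine base change for $p$. Alternatively, I could note that $\Omega_{\WR(Y)}$ is dual to $p_*q^*T_Y$.

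Next I would apply the exact functor $p_*q^*$ to the tangent sequence of $Z\subset X$. Here $p$ is finite, hence affine, so $p_*$ is exact on quasi-coherent sheaves, and $q^*$ is exact on this sequence of locally free sheaves. Naturality gives $q^*i^*T_X=(\WR(i)_S)^*q_X^*T_X$, and since $\WR(i)_S=\WR(i)\times_T S$, affine base change gives $p_*q^*i^*T_X\cong \WR(i)^*T_{\WR(X)}$. The result is a short exact sequence
\[
0\to T_{\WR(Z)}\to \WR(i)^*T_{\WR(X)}\to p_*q^*N_{Z/X}\to 0 .
\]
I must check that the first map is the differential of $\WR(i)$. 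This follows from functoriality of the Yoneda description above. Comparing with the tangent sequence of $\WR(Z)\subset\WR(X)$ then yields the desired isomorphism of cokernels.

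The main obstacle is the compatibility of the tangent-bundle identification with morphisms, so that the two exact sequences match. A second point is where surjectivity of $f$ enters. I expect it is needed only to guarantee that $\WR(Z)$ and $\WR(X)$ behave well. For example, if $f$ misses a component of $T$, the Weil restriction over that part is $T$ itself, so the statement stays consistent. I would remark that surjectivity is used implicitly, for instance for nonemptiness and for the smoothness and rank conventions, in the cited argument of Zhu--Karpenko.
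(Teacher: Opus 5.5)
Your proof is correct, but it takes a genuinely different route from the paper.

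\textbf{The paper's route.} The paper works with normal cones after a faithfully flat base change. It uses the splitting $S\times_T S\cong S\sqcup S'$ to write $\WR(X)_S\cong X\times R'(X_{S'})$ and $\WR(i)_S\cong i\times R'(i)$. It then invokes product and flat-base-change compatibility of normal bundles. Finally it descends a comparison map $N_{\WR(Z)/\WR(X)}\to \WR(N_{Z/X})$, whose construction it leaves implicit, along $S\to T$. This is done over affine opens of $T$ and glued.

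\textbf{Your route.} You prove $T_{\WR(Y)/T}\cong p_*q^*T_{Y/S}$ by the dual-numbers/Yoneda argument, which is the same mechanism as Proposition~\ref{tttt}. You then realize both normal bundles as cokernels of tangent sequences, using exactness of $p_*q^*$ and affine base change along the Cartesian square relating $p_Z$ and $p_X$.

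\textbf{What your route buys.} The isomorphism is canonical and global. No gluing is needed, and no comparison map has to be constructed separately and then descended. There is also no hidden induction on the degree: the paper's check after base change to $S$ tacitly needs the same statement for $S'\to S$.

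\textbf{Surjectivity.} Your argument never uses surjectivity of $f$. In the paper it enters only through faithful flatness in the descent step. So instead of speculating about where surjectivity is used implicitly, you can simply note that the statement holds without it. Over the open and closed part of $T$ not hit by $S$, both sides are the rank-zero bundle.

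\textbf{What the paper's route buys.} Its split-case computation is phrased in terms of normal cones of regular embeddings. It does not rely on the left-exact tangent sequence. Your argument uses smoothness essentially: of $Z,X$ over $S$, and of $\WR(Z),\WR(X)$ over $T$. The latter comes from Proposition~\ref{pp}(3) applied to the structure maps, since $\WR(S)=T$. This is harmless under the stated hypotheses.

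\textbf{A detail to make explicit.} The Yoneda identification must be $\Gamma(W,\mathcal O_W)$-linear, so that you obtain an isomorphism of locally free sheaves and can compare cokernels, rather than merely an isomorphism of schemes. This holds because the module structure on lifts $W[\epsilon]\to\WR(Y)$ comes from $\epsilon\mapsto a\epsilon$ and from the two-variable dual numbers. Both constructions commute with $-\times_T S$.
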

\begin{proof}
We first prove the case when $S, T$ are affine schemes. Then the $S$-scheme $S\times_T S$
splits as a disjoint union $S\times_T S \cong S \sqcup S'$
for some finite étale $S$-scheme $S'$.

We set $\WR(X)_S := \WR(X)\times_T S$. By the standard properties of Weil restriction (cf.\ \cite[(4.2.3), (4.2.6)]{MR1321819}),
there is a decomposition
\[
\WR(X)_S \cong X \times R'(X_{S^{'}}),
\]
where $R'(X_{S^{'}})$ denotes the Weil restriction of $X_{S^{'}}$ along the finite étale morphism $f':S' \to S$.

Similarly, we have $\WR(Z)_S \cong Z \times R'(Z_{S^{'}})$.
Under these identifications, the morphism
\[
\WR(i)_S : \WR(Z)_S \to \WR(X)_S
\]
corresponds to
\[
i \times R'(i) : Z \times R'(Z_{S^{'}}) \to X \times R'(X_{S^{'}}),
\]
which is the product of regular closed embeddings.

By the compatibility of normal bundles with products \cite[Proposition~104.7]{EKM}, we obtain
\[
N_{\WR(Z)_S/\WR(X)_S}
\cong N_{Z/X} \times N_{R'(Z_{S^{'}})/R'(X_{S^{'}})}.
\]

Moreover, since the construction of the normal cone is compatible with flat base change (\cite[Proposition~104.23]{EKM}), we have
\[
N_{\WR(Z)_S/\WR(X)_S} \cong (N_{\WR(Z)/\WR(X)})\times_T S.
\]

Finally, there is a natural morphism
\[
N_{\WR(Z)/\WR(X)} \to \WR(N_{Z/X})
\]
which becomes an isomorphism after base change to $S$.
Since $S \to T$ is faithfully flat, we conclude by faithfully flat descent that
\[
N_{\WR(Z)/\WR(X)} \cong \WR(N_{Z/X}).
\]

We now treat the general case. Let $\{T_i\}_{i\in I}$ be an affine open cover of $T$, and set
\[
S_i:=S\times_T T_i,\qquad X_i:=X\times_S S_i,\qquad Z_i:=Z\times_S S_i.
\]
Since $f:S\to T$ is finite étale, each base change
\[
f_i:S_i\to T_i
\]
is again finite étale. Moreover, as $f_i$ is finite and $T_i$ is affine, the scheme $S_i$ is also affine. Since $f$ is surjective, each $f_i$ is surjective as well.

Applying the affine case to $f_i:S_i\to T_i$ and the closed immersion
\[
i_i:Z_i\hookrightarrow X_i,
\]
we obtain an isomorphism
\[
\WR_{f_i}(N_{Z_i/X_i}) \xrightarrow{\sim} N_{\WR_{f_i}(Z_i)/\WR_{f_i}(X_i)}.
\]

We now identify both sides with the restrictions of the corresponding global objects to $T_i$.

First, Weil restriction is compatible with base change (cf. Proposition \ref{p2}). Hence
\[
\WR_{f}(X)\times_T T_i \cong \WR_{f_i}(X_i),
\qquad
\WR_f(Z)\times_T T_i \cong \WR_{f_i}(Z_i),
\]
and similarly
\[
\WR_f(N_{Z/X})\times_T T_i \cong \WR_{f_i}(N_{Z_i/X_i}).
\]

Next, normal bundles are compatible with base change. So
we have
\[
N_{\WR_f(Z)/\WR_f(X)}\times_T T_i
\cong
N_{\WR_{f_i}(Z_i)/\WR_{f_i}(X_i)}.
\]

Therefore the affine-case isomorphism over $T_i$ may be rewritten as
\[
\bigl(\WR_f(N_{Z/X})\bigr)\big|_{T_i}
\xrightarrow{\sim}
\bigl(N_{\WR_f(Z)/\WR_f(X)}\bigr)\big|_{T_i}.
\]
By Proposition \ref{tttt}, both sides are vector bundles; it remains to check that these local isomorphisms agree on overlaps. Let
\[
T_{ij}:=T_i\cap T_j,
\qquad
S_{ij}:=S\times_T T_{ij}.
\]
Then $S_{ij}$ is canonically identified with both
\[
S_i\times_{T_i} T_{ij}
\quad\text{and}\quad
S_j\times_{T_j} T_{ij}.
\]
By construction, the isomorphisms over $T_i$ and $T_j$ are both obtained from the same affine-case argument after base change to $T_{ij}$. Since all constructions involved — Weil restriction, induced closed embeddings, and normal bundles — are functorial and compatible with base change, the two induced isomorphisms on $T_{ij}$ coincide.

Thus the family of local isomorphisms
\[
\bigl(\WR_f(N_{Z/X})\bigr)\big|_{T_i}
\xrightarrow{\sim}
\bigl(N_{\WR_f(Z)/\WR_f(X)}\bigr)\big|_{T_i}
\]
glues to a global isomorphism
\[
\WR_f(N_{Z/X}) \xrightarrow{\sim} N_{\WR_f(Z)/\WR_f(X)}.
\]
This proves the proposition.\end{proof}

\section{Motivic Thom spaces and Thom classes}
For a vector bundle, the associated Thom space gives rise to a distinguished cohomology class in motivic cohomology theory, called the \emph{Thom class}. In this section, we study the behavior of the Thom class under the Weil restriction map.

\subsection{Motivic Thom spaces and Weil restriction}
In this subsection, we show that the norm functor sends Thom spaces to Thom spaces.
This can be viewed as a direct result of Bachmann--Hoyois on the compatibility of norm functors
with quotients.

Let \(f:S\to T\) be a finite étale morphism, and let
\(\mathcal H_\bullet(S)\) and \(\mathcal H_\bullet(T)\) denote the pointed unstable
motivic homotopy categories over \(S\) and \(T\), respectively. Recall from
Section~\ref{se11} that Bachmann--Hoyois construct the norm functor
\[
N_f=f_\otimes:\mathcal H_\bullet(S)\longrightarrow \mathcal H_\bullet(T).
\]

Let \(E\to X\) be a vector bundle over \(X\in \mathbf{Sm}_S\), and let
\(s:X\hookrightarrow E\) denote the zero section. We define the Thom space of
\(E\) to be the pointed quotient
\[
\Th(E):=E/(E\setminus s(X))
\]
in \(\mathcal H_\bullet(S)\); see \cite[cf. \S 2, Spaces]{MR1648048}. The base
point is the image of the open complement \(E\setminus s(X)\).

We first record the following elementary compatibility of Weil restriction with
zero sections.

\begin{lemma}
\label{l2}
Let \(X\in \mathbf{Sm}_S\), and let \(E\to X\) be a vector bundle with zero
section \(s:X\hookrightarrow E\). Suppose that the Weil restriction of $X$ exists. Then
\[
\WR(s):\WR(X)\longrightarrow \WR(E)
\]
is the zero section of the vector bundle \(\WR(E)\to \WR(X)\).
\end{lemma}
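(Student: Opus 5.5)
The plan is to work entirely with functors of points, using the explicit description of $\WR(E)$ as a vector bundle over $\WR(X)$ from the proof of Proposition~\ref{tttt}. First, $\WR(s)$ is a section of $\WR(\pi):\WR(E)\to\WR(X)$, where $\pi:E\to X$ is the projection. Indeed, Weil restriction is a functor, so $\pi\circ s=\id_X$ gives $\WR(\pi)\circ\WR(s)=\WR(\id_X)=\id_{\WR(X)}$. It then remains to show that this section is the \emph{zero} section. For this I will use the identification
\[
\WR(E)\cong \mathbf{Spec}\bigl(\Sym^{*}(\mathcal J^\vee)\bigr),\qquad \mathcal J=p_{*}q^{*}\mathcal E,
\]
under which, for any $\WR(X)$-scheme $u:Z\to\WR(X)$, the set $\Hom_{\WR(X)}(Z,\WR(E))$ is $\Gamma(Z,u^*\mathcal J)\cong\Gamma(Z_S,q_Z^*\mathcal E)\cong\Hom_X(Z_S,E)$.

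The key step is to trace the zero section through this chain of bijections. The zero section of $\mathbf{Spec}(\Sym^*(\mathcal J^\vee))$ corresponds, for each $Z$, to the element $0\in\Gamma(Z,u^*\mathcal J)$. The isomorphism $u^*\mathcal J\cong p_{Z*}q_Z^*\mathcal E$ is $\mathcal O$-linear, so it sends $0$ to $0\in\Gamma(Z_S,q_Z^*\mathcal E)$. That element in turn corresponds to the morphism $Z_S\to E$ given by $s\circ q_Z$, the composition of $Z_S\to X$ with the zero section of $E$. On the other side, $\WR(s)\circ u$ corresponds, under the adjunction bijection $\Hom_T(Z,\WR(E))\cong\Hom_S(Z_S,E)$, to $s$ composed with the adjoint of $u$. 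The adjoint of $u$ is exactly $q_Z$, because $q:\WR(X)_S\to X$ is the counit and the adjunction is natural. So both sections correspond to $s\circ q_Z$ for every $Z$, and Yoneda gives $\WR(s)$ equal to the zero section. Taking $Z=\WR(X)$ with $u=\id$ already suffices: there the zero section is the morphism corresponding to $0\in\Gamma(\WR(X),\mathcal J)=\Gamma(\WR(X)_S,q^*\mathcal E)$, and $\WR(s)$ corresponds to $s\circ q$, which is the zero element.

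The main obstacle is bookkeeping rather than mathematics. I must check that the bijection in Proposition~\ref{tttt} really is the composite of the relative-Spec bijection and the Weil-restriction adjunction bijection, so that it is compatible with the structure maps. In particular, the identification $\Hom_X(Z_S,E)\cong\Gamma(Z_S,q_Z^*\mathcal E)$ must send $s\circ q_Z$ to $0$. That holds by the definition of the zero section of $\mathbf{Spec}(\Sym^*\mathcal E^\vee)$ together with base-change compatibility of zero sections. I will state this naturality explicitly and then conclude.
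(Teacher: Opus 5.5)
Your proposal is correct and is essentially the paper's argument: the section property follows from functoriality of $\WR$, and the zero-section property follows because the adjoint of $\WR(s)\circ u$ is $s\circ\tilde u$ (your $s\circ q_Z$), which is the zero vector. The only difference is that you explicitly trace $0$ through the bijections $\Gamma(Z,u^*\mathcal J)\cong\Gamma(Z_S,q_Z^*\mathcal E)\cong\Hom_X(Z_S,E)$ that define the vector bundle structure in Proposition~\ref{tttt}; the paper uses this compatibility implicitly when it says that $\WR(s)\circ u$ is the zero vector in the fiber of $\WR(E)$.
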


\begin{proof}
Let \(\pi:E\to X\) be the bundle projection. Since \(s\) is a section of \(\pi\),
we have
\[
\pi\circ s=\id_X.
\]
Applying Weil restriction gives
\[
\WR(\pi)\circ \WR(s)=\WR(\id_X)=\id_{\WR(X)}.
\]
Thus \(\WR(s)\) is a section of the vector bundle \(\WR(E)\to \WR(X)\).

It remains to check that this section is the zero section. Let \(Y\) be a
\(T\)-scheme and let
\[
u:Y\to \WR(X)
\]
be a \(T\)-morphism. By the universal property of Weil restriction, \(u\)
corresponds to an \(S\)-morphism
\[
\widetilde u:Y\times_T S\to X.
\]
The composition
\[
\WR(s)\circ u:Y\to \WR(E)
\]
corresponds, again by the universal property of Weil restriction, to the morphism
\[
s\circ \widetilde u:Y\times_T S\to E.
\]
Since \(s\circ \widetilde u\) is the zero vector in the fiber of \(E\) over
\(\widetilde u\), it follows that \(\WR(s)\circ u\) is the zero vector in the fiber
of \(\WR(E)\) over \(u\). This holds functorially in \(Y\) and \(u\). Hence
\(\WR(s)\) is the zero section of \(\WR(E)\to \WR(X)\).
\end{proof}

We now prove the following compatibility. 

\begin{prop}
\label{t6}
Let \(f:S\to T\) be a finite étale morphism, let \(X\in \mathbf{Sm}_S\), and let
\(E\to X\) be a vector bundle with zero section \(s:X\hookrightarrow E\). Suppose that the Weil restriction of $X$ exists. Then \(R(E)\) exists by Proposition \ref{tttt} and there is a natural isomorphism in
\(\mathcal H_\bullet(T)\):
\[
N_f(\Th(E))\simeq \Th(\WR(E)).
\]
\end{prop}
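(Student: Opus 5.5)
The plan is to reduce the statement to the compatibility of $N_f$ with quotients, which Bachmann--Hoyois establish for open embeddings of smooth schemes. Concretely, by \cite{MR4288071} (the lemma on norms of quotients $X/U$ for an open embedding $U\hookrightarrow X$ in $\mathbf{Sm}_S$), there is a natural equivalence
\[
N_f(X/U)\simeq \WR(X)/\WR(U)
\]
in $\mathcal H_\bullet(T)$, provided the Weil restrictions exist. This formula is \emph{not} a consequence of monoidality alone. Its proof uses the description of $N_f$ on the cofiber: $\WR(X)/\WR(U)$ is the cofiber of $\WR(U)\to\WR(X)$ even though $N_f$ does not preserve colimits. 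Instead, one filters $N_f$ of a cofiber by the ``cubical'' Weil restriction. I will take this result as the key input and apply it with $X$ replaced by $E$ and $U=E\setminus s(X)$.

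The first step is to check the hypotheses. By Proposition~\ref{tttt}, $\WR(E)$ exists and $\WR(E)\to\WR(X)$ is a vector bundle; it is smooth over $T$ by Proposition~\ref{pp}(3). The complement $U:=E\setminus s(X)$ is an open subscheme of $E$. Since $E$ is quasi-projective over $X$ locally, Weil restriction of an open subscheme exists, as an open subscheme of $\WR(E)$; more precisely, Proposition~\ref{pp}(1) says $\WR(U)\to\WR(E)$ is an open embedding.

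The second step, which I expect to be the main obstacle, is to identify $\WR(U)$ with the complement $\WR(E)\setminus \WR(s)(\WR(X))$. In general this identification fails: a $Y$-point of $\WR(U)$ is a map $Y_S\to E$ avoiding the zero section fiberwise over \emph{every} point of $Y_S$. By contrast, a point avoiding the zero section of $\WR(E)$ only needs to be nonzero in at least one factor. Over a split cover $S=\coprod_d T$, one sees $\WR(U)=\prod (E_i\setminus 0)$, whereas $\WR(E)\setminus 0=\prod E_i\setminus \prod 0$.

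So I would not claim a naive equality. Instead, I would argue that the Bachmann--Hoyois quotient formula computes $N_f(E/U)$ as $\WR(E)$ modulo the union of the ``partial'' loci, namely those points lying in $U$ over some point of $Y_S$. The precise statement in \cite{MR4288071} is that $N_f(X/U)\simeq \WR(X)/W$, where $W\subset\WR(X)$ is the open subscheme of points $Y_S\to X$ that land in $U$ over \emph{some} component/point. It is $W$, not $\WR(U)$, that appears. I would check that this open $W$ is exactly $\WR(E)\setminus\WR(s)(\WR(X))$, using Lemma~\ref{l2}: its closed complement consists of the $Y$-points whose adjoint map $Y_S\to E$ factors through $s$ everywhere, and these are precisely the $Y$-points factoring through $\WR(s)$. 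This check can be done étale locally on $T$ via Proposition~\ref{p2}(1), where $S$ splits and the claim reduces to $\prod E_i\setminus\prod 0$.

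Combining the two steps gives
\[
N_f(\Th(E))=N_f\bigl(E/(E\setminus s(X))\bigr)\simeq \WR(E)/\bigl(\WR(E)\setminus \WR(s)(\WR(X))\bigr)=\Th(\WR(E)).
\]
Naturality follows from the functoriality of the Bachmann--Hoyois equivalence in pairs $(X,U)$ together with the functoriality of $\WR$.
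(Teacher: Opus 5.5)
Your final argument is the paper's: the Bachmann--Hoyois quotient formula \cite[Proposition~3.13]{MR4288071}, applied to the closed immersion $s(X)\subset E$, gives $N_f(\Th(E))\simeq \WR(E)/\bigl(\WR(E)\setminus \WR(s(X))\bigr)$, and Lemma~\ref{l2} identifies $\WR(s(X))$ with the zero section of $\WR(E)\to\WR(X)$; your open $W$ is exactly $\WR(E)\setminus\WR(s(X))$. The opening formula $N_f(X/U)\simeq \WR(X)/\WR(U)$ is false, as you observe yourself via the split case, so delete it together with the discussion of $\WR(U)$ being open, since neither plays any role in the correct argument.
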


\begin{proof}
Put \(Z:=s(X)\subset E\). By Bachmann--Hoyois
\cite[Proposition~3.13]{MR4288071}, applied to the closed immersion
\(Z\subset E\), we have
\[
N_f\bigl(E/(E\setminus Z)\bigr)
\simeq
\WR(E)/\bigl(\WR(E)\setminus \WR(Z)\bigr).
\]
By Lemma~\ref{l2}, the morphism
\[
\WR(s):\WR(X)\to \WR(E)
\]
is the zero section of the vector bundle \(\WR(E)\to \WR(X)\). Hence
\(\WR(Z)\) is the image of this zero section. Therefore
\[
\WR(E)/\bigl(\WR(E)\setminus \WR(Z)\bigr)
=
\Th(\WR(E)).
\]and the result follows.
\end{proof}

We will also use the corresponding statement in the setting of motives. Recall that the Thom space of a vector bundle is defined in Voevodsky's category of motives in the same way. Let \(L/k\) be a finite Galois extension, and write
\[
f:\Spec L\to \Spec k
\]
for the corresponding finite étale morphism. As recalled in Section~\ref{propnorm}, the adding-transfers functor $Z^{tr}$ of Bachmann--Hoyois gives rise to a norm functor
\[
N_f=f_\otimes:DM(L,\mathbb Q)\longrightarrow DM(k,\mathbb Q).
\]

Applying the adding-transfers functor to the isomorphism of Proposition~\ref{t6}, and using the compatibility of the adding-transfers functor with norm functors, we obtain a corresponding isomorphism

\begin{equation}
\label{gye}
    N_f(\Th(E))\cong\Th(\WR(E))
\end{equation}
in \(DM(k,\mathbb Q)\), where, by abuse of notation, \(\Th(E)\) and \(\Th(\WR(E))\) denote the motives of the corresponding Thom spaces. This is the motivic form of the Thom space compatibility that will be used below.

We conclude this subsection with the following theorem, which will be used in the next subsection.

\begin{thm}[{\cite[Proposition 14.5]{MR2242284}}]
\label{gy}
Let $k$ be a field. We let \(i: Z \hookrightarrow X\) be a closed embedding in \(\mathbf{Sm}_k\), and let \(N_{Z/X}\) be the associated normal bundle. Denote by
\[
M_Z(X):=X/(X\setminus Z)
\]
the quotient object in \(DM(k,\Z)\). Then there is a canonical isomorphism (the Gysin isomorphism)
\[
\Th(N_{Z/X}) \cong M_Z(X).
\]
Moreover, these objects fit into the distinguished triangle (Gysin triangle)
\[
M(X\setminus Z)\longrightarrow M(X)\longrightarrow M_Z(X)\longrightarrow M(X\setminus Z)[1] \]
\end{thm}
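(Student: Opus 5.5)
The statement is the Gysin isomorphism $\Th(N_{Z/X})\cong M_Z(X)$ in $DM(k,\Z)$, together with the Gysin triangle. I will prove it by deformation to the normal cone, reducing to the tautological case of a zero section.

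First I dispose of the triangle. The quotient $X/(X\setminus Z)$ is by definition the cofiber of the map $M(X\setminus Z)\to M(X)$ induced by the open immersion, so the triangle
\[
M(X\setminus Z)\longrightarrow M(X)\longrightarrow M_Z(X)\longrightarrow M(X\setminus Z)[1]
\]
is distinguished essentially by construction. The real content is the isomorphism.

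For the isomorphism I use the deformation space. Let $D=D_Z X$ be the complement, in the blow-up of $X\times\mathbb A^1$ along $Z\times\{0\}$, of the strict transform of $X\times\{0\}$. Then $D$ is smooth over $k$ and flat over $\mathbb A^1$. Its fibre over $1$ is $X$, and its fibre over $0$ is $N_{Z/X}$. It also contains the closed subscheme $Z\times\mathbb A^1$, which restricts to $Z\subset X$ over $1$ and to the zero section $Z\subset N_{Z/X}$ over $0$. The two fibre inclusions give morphisms of pairs, and hence maps
\[
\Th(N_{Z/X})=M_Z(N_{Z/X})\longrightarrow M_{Z\times\mathbb A^1}(D)\longleftarrow M_Z(X).
\]
The identification $\Th(N_{Z/X})=M_Z(N_{Z/X})$ holds by the definition of the Thom space as $E/(E\setminus s(X))$, which this paper also uses. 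The plan is to show both arrows are isomorphisms.

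Checking that the arrows are isomorphisms is the main obstacle. I would reduce it to a local statement in three steps.

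\emph{Zariski locality.} Both sides satisfy Zariski (in fact Nisnevich) excision and Mayer--Vietoris in $Z$ and $X$, so it suffices to work locally on $X$ around points of $Z$.

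\emph{Reduction to a linear embedding.} Locally there is an étale map $X\to\mathbb A^n$ under which $Z$ is the preimage of a linear $\mathbb A^{m}$. By Nisnevich excision, the pair $(X,Z)$ can then be replaced by $(Z\times\mathbb A^{n-m},\,Z\times 0)$.

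\emph{The linear case.} Here the deformation space is the trivial family, $D=N\times\mathbb A^1$ with $N=Z\times\mathbb A^{n-m}$. Both arrows are then isomorphisms by $\mathbb A^1$-invariance of $M_{Z\times\mathbb A^1}(N\times\mathbb A^1)$.

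Finally, I would check that the resulting isomorphism is canonical. It must be independent of the local étale coordinates, which holds because it is defined globally by the zigzag through $D$; the local reduction is used only to verify that the maps are isomorphisms.
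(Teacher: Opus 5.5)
The paper gives no proof of this statement; it only quotes it from Mazza--Voevodsky--Weibel. Your proposal is the standard argument behind that result (deformation to the normal cone, localisation to étale coordinates, and $\mathbb{A}^1$-invariance in the linear case, i.e.\ Morel--Voevodsky homotopy purity passed to motives), and it is correct along the expected route. The one step you compress is the excision. The étale map $X\to\mathbb{A}^n$ is not itself an excision for $(X,Z)$, because $Z\to\mathbb{A}^m$ is only étale, not an isomorphism. You need the usual intermediate scheme $X\times_{\mathbb{A}^n}(Z\times\mathbb{A}^{n-m})$ with the off-diagonal part of the preimage $Z\times_{\mathbb{A}^m}Z$ removed. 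This scheme is étale over both $X$ and $Z\times\mathbb{A}^{n-m}$ and maps isomorphically onto $Z$ in each case, and the deformation spaces and normal bundles are compatible with these étale maps.
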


\subsection{Thom classes }
We fix a base field $k$ and let $L/k$ be a finite Galois extension with Galois group $G:=\Gal(L/k)$. Then $f:\Spec(L)\to \Spec(k)$ is a surjective finite étale morphism of constant degree. Throughout this subsection, we will always take the Weil restriction and norm functor along $f$.

Let $X$ be a smooth $L$-scheme. The \emph{cohomology group} of $X$ of bidegree $(p,q)$, denoted by $H^{p,q}(X,\Z)$, is defined as the cohomology group computed in $DM(L,\Z)$. 

Now we define the Thom classes. Let $X$ be a smooth $L$-scheme and let $E \to X$ be a vector bundle of rank $n$. Let $s:X \hookrightarrow E$ denote the zero section. It induces a Gysin morphism $s_* : H^{*,*}(X,\Z) \longrightarrow H^{*+2n,*+n}_X(E,\Z)$,
where $H^{*,*}_X(E,\Z)$ denotes the cohomology of $E$ with support on $X$, represented by $M_X(E)$.

Let $1_X \in H^{0,0}(X,\Z)$ be the unit element. Its image $s_*(1_X) \in H^{2n,n}_X(E,\Z)$ defines, under the canonical isomorphism 
$H^{*,*}_X(E,\Z) \cong H^{*,*}(\Th(E),\Z)$ given by the Gysin isomorphism (Theorem~\ref{gy}), the \emph{Thom class}
\[
\xi_E := s_*(1_X) \in H^{2n,n}(\Th(E),\Z).
\]

We now restrict ourselves to motivic cohomology with rational coefficients and suppose that \([L:k]=d\). 

Let \(X\) be a smooth \(L\)-scheme and let \(E\to X\) be a vector bundle.
Suppose that \(\WR(X)\) exists. Recall that, in this case, 
the Weil restriction map on motivic cohomology groups has been constructed in even cohomological degrees; see \cite[Compatibility, \S 4.3]{WMC}.

We will also use the analogous construction for motivic cohomology with
supports. Let \(i:Z\hookrightarrow Y\) be a closed immersion of smooth
\(L\)-schemes, and suppose that \(\WR(Y)\) and \(\WR(Z)\) exist. Then, by the
same method as in \cite[Compatibility, \S 4.3]{WMC}, one defines a Weil restriction map
\[
\WR:
H_Z^{p,q}(Y,\mathbb Q)
\longrightarrow
H_{\WR(Z)}^{dp,dq}(\WR(Y),\mathbb Q)
\]
for \(p\) even, where \(d=[L:k]\). Indeed, after base change to \(L\), the
target identifies with the \(G\)-invariant part of
\[
H_{\prod_{\sigma\in G} Z^\sigma}^{dp,dq}
\left(\prod_{\sigma\in G}Y^\sigma,\mathbb Q\right),
\]
and the image of a class \(\alpha\in H_Z^{p,q}(Y,\mathbb Q)\) is defined as
the unique class whose base change is the external product
\[
\prod_{\sigma\in G}\alpha^\sigma .
\]
The required Galois descent statement for cohomology with supports follows
from the Gysin triangle (Theorem \ref{gy}) together with the Galois
descent result established in \cite[Galois Descent, \S 4.2]{WMC}.

More precisely, we let $Z\hookrightarrow X$ be a closed embedding of smooth schemes over $k$. Applying the pullback functor along $f:\Spec(L)\to \Spec(k)$ to the Gysin triangle (Theorem \ref{gy}) associated with $Z\hookrightarrow X$, we have the following diagram of distinguished triangles \[\begin{tikzcd}
M(X_{L}\setminus Z_{L}) \arrow[r]   & M(X_{L}) \arrow[r]       & M_{Z_{l}}(X_{L}) \arrow[r]   & {M(X_{L}\setminus Z_{L})[1]}   \\
M(X\setminus Z) \arrow[r] \arrow[u] & M(X) \arrow[r] \arrow[u] & M_{Z}(X) \arrow[r] \arrow[u] & {M(X\setminus Z)[1]} \arrow[u]
\end{tikzcd}\]

Now we pass to the cohomology groups. Since the cohomology groups have rational coefficients, we can apply the natural $G:=\Gal(L/k)$-action and obtain the following commutative diagram of exact sequences
\[
\begin{tikzcd}
\cdots \arrow[r] & {H^{p-1,q}(X_{L}\setminus Z_{L},\Q)}^{G} \arrow[r]             & {H^{p,q}_{Z_{L}}(X_{L},\Q)^{G}} \arrow[r]     & {H^{p,q}(X_{L},\Q)^{G}} \arrow[r]            & \cdots \\
\cdots \arrow[r] & {H^{p-1,q}(X\setminus Z, \Q)} \arrow[r] \arrow[u, "\cong"] & {H^{p,q}_{Z}(X)} \arrow[u, "f^{}*"] \arrow[r] & {H^{p,q}(X,\Q)} \arrow[r] \arrow[u, "\cong"] & \cdots
\end{tikzcd}
\]
Then the desired Galois descent result follows from the standard $5$-Lemma.

It is known that the motivic spectrum representing motivic cohomology is a
\emph{normed spectrum} in the sense of Bachmann--Hoyois
\cite[Example~7.12]{MR4288071}. Therefore, by
\cite[Proposition~7.17]{MR4288071}, the norm functor induces operations on
motivic cohomology groups.

In the setting of motivic cohomology with supports, let
\(i:Z\hookrightarrow Y\) be a closed immersion of smooth \(L\)-schemes. We write
\[
M_Z(Y):=Y/(Y\setminus Z).
\]
Then motivic cohomology with supports is identified with the motivic cohomology
of this quotient:
\[
H_Z^{p,q}(Y,\mathbb Q)=H^{p,q}(M_Z(Y),\mathbb Q).
\]
Using the identification
\[
N_f(M_Z(Y))\simeq M_{\WR(Z)}(\WR(Y)),
\]
the normed structure induces an operation
\[
N_f^H:
H_Z^{p,q}(Y,\mathbb Q)
\longrightarrow
H_{\WR(Z)}^{dp,dq}(\WR(Y),\mathbb Q).
\]

In particular, for a vector bundle \(E\to X\) with zero section
\(s:X\hookrightarrow E\), taking \(Z=X\) and \(Y=E\) gives
\[
N_f^H:
H_X^{p,q}(E,\mathbb Q)
\longrightarrow
H_{\WR(X)}^{dp,dq}(\WR(E),\mathbb Q).
\]
Equivalently, using the Thom space identifications
\[
H_X^{p,q}(E,\mathbb Q)\simeq H^{p,q}(\Th(E),\mathbb Q),
\qquad
H_{\WR(X)}^{dp,dq}(\WR(E),\mathbb Q)
\simeq H^{dp,dq}(\Th(\WR(E)),\mathbb Q),
\]we have \[N_f^H:
H^{p,q}(\Th(E),\mathbb Q)
\longrightarrow
H^{dp,dq}(\Th(\WR(E)),\mathbb Q).\]
This is the norm operation on the corresponding Thom spaces, where we use
Equation~\ref{gye}.

The following lemma identifies this norm-induced operation with the Weil restriction map. This comparison will be used to prove that the Weil restriction operation sends Thom classes of a vector bundle to the Thom classes of its Weil restriction.

\begin{lemma}
\label{ab}
Let \(L/k\) be a finite Galois extension of degree \(d\), with Galois group
\(G\), and let \(f:\operatorname{Spec}L\to \operatorname{Spec}k\) be the
corresponding morphism. Let $i:Z\hookrightarrow Y$ be a closed immersion of smooth \(L\)-schemes such that \(\WR(Y)\) and
\(\WR(Z)\) exist. Under the natural identification
\[
N_f(M_Z(Y))\cong M_{\WR(Z)}(\WR(Y)),
\]
the norm-induced map on motivic cohomology with supports
\[
N_f^H:
H_Z^{p,q}(Y,\mathbb Q)
\longrightarrow
H_{\WR(Z)}^{dp,dq}(\WR(Y),\mathbb Q)
\]
agrees, for \(p\) even, with the Weil restriction map
\[
\WR:
H_Z^{p,q}(Y,\mathbb Q)
\longrightarrow
H_{\WR(Z)}^{dp,dq}(\WR(Y),\mathbb Q).
\]
\end{lemma}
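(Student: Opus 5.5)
The plan is to compare the two maps after base change to \(L\), where Galois descent makes the target injective, and where the norm functor becomes a tensor product.

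\emph{Step 1: reduce to a comparison over \(L\).} With rational coefficients, the Galois descent argument above (the 5-Lemma applied to the Gysin triangles) shows that the pullback
\[
f^*:H_{\WR(Z)}^{dp,dq}(\WR(Y),\mathbb Q)\longrightarrow H_{\WR(Z)_L}^{dp,dq}(\WR(Y)_L,\mathbb Q)
\]
is injective, with image the \(G\)-invariants. So it suffices to show that \(f^*N_f^H(\alpha)\) equals \(f^*\WR(\alpha)\) for every \(\alpha\in H_Z^{p,q}(Y,\mathbb Q)\). By definition, \(f^*\WR(\alpha)\) is the external product \(\prod_{\sigma\in G}\alpha^\sigma\) under the identification
\[
\WR(Y)_L\cong\prod_{\sigma\in G} Y^\sigma,\qquad \WR(Z)_L\cong\prod_{\sigma\in G} Z^\sigma .
\]

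\emph{Step 2: compute \(f^*N_f^H(\alpha)\).} I will use the base change property of Section \ref{propnorm}, namely \(g^*N_f\cong N_{f'}g'^*\), with \(g=f\). The fibre product \(\Spec L\otimes_k L\) splits as \(\coprod_{\sigma\in G}\Spec L\), so \(f'\) is the fold map \(\nabla\), and \(g'^*\) restricts an object to the components \(\sigma\), which gives the twists \((-)^\sigma\). The split-norm formula then yields
\[
f^*N_f(M_Z(Y))\cong\bigotimes_{\sigma\in G} M_{Z^\sigma}(Y^\sigma)\cong M_{\prod Z^\sigma}\Bigl(\prod Y^\sigma\Bigr).
\]
For the normed-spectrum operation, Bachmann--Hoyois's Proposition~7.17 describes \(N^H\) along a fold map as the multiplication \(\bigotimes H\mathbb Q\to H\mathbb Q\) applied to the tensor power of classes. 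This is exactly the external product. Hence \(f^*N_f^H(\alpha)=\prod_\sigma\alpha^\sigma\), provided the normed structure is compatible with base change, which is part of the structure in \cite[\S 7]{MR4288071}.

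\emph{Step 3: match the identifications.} I must check that the isomorphism \(N_f(M_Z(Y))\cong M_{\WR(Z)}(\WR(Y))\) (from \cite[Proposition~3.13]{MR4288071}), after base change, agrees with the isomorphism induced by the scheme-level isomorphism \(\WR(Y)_L\cong\prod Y^\sigma\) of Proposition \ref{p2}. The same compatibility is needed for the Künneth isomorphism for supports. I will verify this first on representables, using the naturality of \(N_f(X_+)\simeq\WR(X)_+\), and then pass to quotients.

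I expect this coherence check to be the main obstacle: the base-change isomorphism for norms and the decomposition of the Weil restriction must be shown to coincide, and the sign or ordering of the factors must be tracked. Evenness of \(p\) is used here, since it makes the external product independent of the ordering of \(G\), and therefore \(G\)-invariant. That invariance is what makes the descended class \(\WR(\alpha)\) well defined and lets the two maps agree after descent.
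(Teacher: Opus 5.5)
Your proposal is correct and follows the paper's proof. Like the paper, you use rational Galois descent for cohomology with supports to reduce to a comparison over $L$, then apply $g^*N_f\cong N_{f_L}g'^*$ with $f_L$ the fold map, so that the norm of $\alpha$ becomes the external product $\prod_{\sigma\in G}\alpha^\sigma$, and finish using that $p$ is even and that descent is unique. Your Step 3 (compatibility of the identification $N_f(M_Z(Y))\cong M_{\WR(Z)}(\WR(Y))$ with $\WR(Y)_L\cong\prod_\sigma Y^\sigma$ and the K\"unneth isomorphism for supports) is a coherence point the paper takes for granted as ``the natural identification,'' so flagging it only makes your argument more careful.
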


\begin{proof}
Let \(G=\operatorname{Gal}(L/k)\). Consider the Cartesian square
\[
\begin{CD}
\operatorname{Spec}(L\otimes_k L) @>{g'}>> \operatorname{Spec}L\\
@V{f_L}VV @VV{f}V\\
\operatorname{Spec}L @>{g}>> \operatorname{Spec}k .
\end{CD}
\]
Since \(L/k\) is Galois, there is an isomorphism of \(L\)-algebras
\[
L\otimes_k L\simeq \prod_{\sigma\in G}L.
\]
Hence \(f_L\) is identified with the fold map
\[
\coprod_{\sigma\in G}\operatorname{Spec}L
\longrightarrow
\operatorname{Spec}L.
\]

By the base-change compatibility of the Bachmann--Hoyois norm functor and
the fact that the norm along a fold map is given by tensor products
(see Section \ref{propnorm}), we have
\[
g^*N_f(M_Z(Y))
\cong
N_{f_L}(g'^*M_Z(Y))
\cong
\bigotimes_{\sigma\in G}M_{Z^\sigma}(Y^\sigma).
\]
Moreover, the tensor product of motives with support identifies with the
motive with support of the product pair:
\[
\bigotimes_{\sigma\in G}M_{Z^\sigma}(Y^\sigma)
\cong
M_{\prod_{\sigma\in G}Z^\sigma}
\left(\prod_{\sigma\in G}Y^\sigma\right).
\]
On the other hand, by the base-change property of Weil restriction,
\[
\WR(Y)_L\simeq \prod_{\sigma\in G}Y^\sigma,
\qquad
\WR(Z)_L\simeq \prod_{\sigma\in G}Z^\sigma.
\]
Therefore
\[
M_{\WR(Z)}(\WR(Y))_L
\cong
M_{\prod_{\sigma\in G}Z^\sigma}
\left(\prod_{\sigma\in G}Y^\sigma\right).
\]

Let $\alpha\in H_Z^{p,q}(Y,\mathbb Q)$,
with \(p\) even. By the Galois descent isomorphism for motivic cohomology with supports, pullback to \(L\) identifies
\[
H_{\WR(Z)}^{dp,dq}(\WR(Y),\mathbb Q)
\]
with the \(G\)-invariant part of
\[
H_{\prod_{\sigma\in G}Z^\sigma}^{dp,dq}
\left(\prod_{\sigma\in G}Y^\sigma,\mathbb Q\right).
\]
Thus it is enough to compare the two classes after base change to \(L\).

By base-change compatibility of norms, the pullback of \(N_f^H(\alpha)\) to
\(L\) is the norm along \(f_L\) of \(g'^*\alpha\). Under the splitting
\[
\operatorname{Spec}(L\otimes_k L)\simeq
\coprod_{\sigma\in G}\operatorname{Spec}L,
\]
the class \(g'^*\alpha\) is identified with the family of conjugate classes
\[
(\alpha^\sigma)_{\sigma\in G},
\qquad
\alpha^\sigma\in H_{Z^\sigma}^{p,q}(Y^\sigma,\mathbb Q).
\]
Since \(f_L\) is the fold map, its norm is the external product
\[
\prod_{\sigma\in G}\alpha^\sigma
\in
H_{\prod_{\sigma\in G}Z^\sigma}^{dp,dq}
\left(\prod_{\sigma\in G}Y^\sigma,\mathbb Q\right).
\]
Because \(p\) is even, this external product is \(G\)-invariant. By definition of the Weil restriction map on motivic cohomology with supports, the class \(\WR(\alpha)\) is the unique class whose base change to \(L\) is
\[
\prod_{\sigma\in G}\alpha^\sigma.
\]
Therefore
\[
N_f^H(\alpha)=\WR(\alpha).
\]
\end{proof}

Finally, we want to show that the Weil restriction map sends Thom classes to Thom classes. 
\begin{thm}
\label{t5}
   Let \(E\to X\) be a vector bundle of rank \(n\) over a smooth
\(L\)-scheme, and suppose that \(\WR(X)\) exists. Under the natural identification
\[
N_f(\operatorname{Th}(E))\cong \operatorname{Th}(\WR(E)),
\]
the Weil restriction map of cohomology groups with rational coefficients sends the Thom class of \(E\) to the
Thom class of \(\WR(E)\):
\[
\WR(\xi_E)=\xi_{\WR(E)}
\in H^{2nd,nd}(\operatorname{Th}(\WR(E)),\Q).
\]
\end{thm}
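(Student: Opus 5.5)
The plan is to reduce, via Galois descent, to a computation over $L$, where the Weil restriction becomes a product of Galois conjugates. There the Thom class is multiplicative under external products. Set $Y=E$ and $Z=s(X)$. By Lemma~\ref{l2}, $\WR(s)$ is the zero section of $\WR(E)\to\WR(X)$, so $\WR(Z)$ is the zero section of $\WR(E)$. By Proposition~\ref{tttt}, $\WR(E)$ is a vector bundle; its rank is $nd$, which can be checked after base change to $L$. The Thom class $\xi_E=s_*(1_X)$ lies in $H^{2n,n}_X(E,\Q)$, and $p=2n$ is even. So the Weil restriction map with supports is defined, and by Lemma~\ref{ab} it agrees with $N_f^H$. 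Its target is $H^{2nd,nd}_{\WR(X)}(\WR(E),\Q)$, which the Gysin isomorphism (Theorem~\ref{gy}) identifies with $H^{2nd,nd}(\Th(\WR(E)),\Q)$. This is compatible with the identification (\ref{gye}).

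By the Galois descent statement for cohomology with supports established just before Lemma~\ref{ab}, pullback along $g:\Spec L\to\Spec k$ is injective on this group. It therefore suffices to show that $g^*\WR(\xi_E)$ and $g^*\xi_{\WR(E)}$ coincide in
\[
H^{2nd,nd}_{\prod_{\sigma}X^\sigma}\Bigl(\prod_{\sigma\in G}E^\sigma,\Q\Bigr).
\]
By the definition of $\WR$ on cohomology with supports, $g^*\WR(\xi_E)=\prod_{\sigma\in G}\xi_E^\sigma$. Since $\sigma$ acts by the base-change isomorphism $E\to E^\sigma$, which commutes with zero sections and Gysin maps, we have $\xi_E^\sigma=\xi_{E^\sigma}$.

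For the other side, I will use two facts.
\begin{itemize}
\item[(a)] Thom classes are compatible with flat (here étale) base change. Hence $g^*\xi_{\WR(E)}=\xi_{\WR(E)_L}$, and $\WR(E)_L\cong\prod_\sigma E^\sigma$ as a vector bundle over $\WR(X)_L\cong\prod_\sigma X^\sigma$. This follows from Proposition~\ref{p2} together with the explicit description of $\WR(E)$ in Proposition~\ref{tttt}.
\item[(b)] Thom classes are multiplicative for external products. The product bundle $E_1\boxplus E_2$ over $X_1\times X_2$ has zero section $s_1\times s_2$, and
\[
(s_1\times s_2)_*(1)=s_{1*}(1)\times s_{2*}(1)
\]
under the Künneth-type identification $M_{Z_1\times Z_2}(Y_1\times Y_2)\cong M_{Z_1}(Y_1)\otimes M_{Z_2}(Y_2)$ used in the proof of Lemma~\ref{ab}.
\end{itemize}
Combining (a) and (b) gives $g^*\xi_{\WR(E)}=\prod_\sigma\xi_{E^\sigma}$, which finishes the proof.

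The main obstacle is (b): compatibility of the Gysin isomorphism of Theorem~\ref{gy} with tensor products. I would argue through deformation to the normal cone. The Gysin isomorphism is built from the deformation space $D_ZY$, and $D_{Z_1\times Z_2}(Y_1\times Y_2)$ maps naturally to the fibre product $D_{Z_1}Y_1\times_{\mathbb{A}^1}D_{Z_2}Y_2$ compatibly with the two ends. Naturality of the resulting maps of motives then gives the product formula. An alternative is to characterize the Thom class as the unique class restricting to a generator on each fibre $\Th(\mathbb{A}^n_x)$, which reduces (b) to the case of trivial bundles over points, where it is the standard identification $\mathbb{Q}(1)[2]^{\otimes n}\cong\mathbb{Q}(n)[2n]$. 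The only other point needing care is the sign in the external product: since every degree $2n$ is even, no Koszul signs arise, and the product is $G$-invariant.
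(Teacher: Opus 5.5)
Your proposal is correct and follows essentially the same route as the paper. Both arguments use injectivity of pullback to $L$ on cohomology with supports (Galois descent) to reduce to comparing $\prod_{\sigma\in G}(s_*(1_X))^\sigma$ with the base change of $\WR(s)_*(1_{\WR(X)})$, using $(s_*(1_X))^\sigma=s^\sigma_*(1_{X^\sigma})$; your facts (a) and (b), base change of Thom classes and their multiplicativity under external products, supply the justification for $\bigl(\WR(s)_*(1_{\WR(X)})\bigr)_L\cong\prod_{\sigma\in G}s^\sigma_*(1_{X^\sigma})$, which the paper asserts without proof.
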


\begin{proof}
    
We let $s:X \hookrightarrow E$ be the zero section embedding. By Lemma \ref{l2}, we have that $\WR(s):\WR(X)\hookrightarrow \WR(E)$ is the zero section embedding of the vector bundle $\WR(E)\to \WR(X)$. Suppose that $[L:k]=d$. We set $G:=\Gal(L/k)$. Using Proposition \ref{t6} in the setting of motives, we have 
$N_{f}(\Th(E))\cong \Th(\WR(E))$. Now we pass to the cohomology groups, which gives \[N_{f}^{H}: H^{2n,n}(\Th(E),\Q)\to H^{2dn,dn}(\Th(\WR(E)),\Q).\]

From the definition of Thom classes and Lemma \ref{ab}, it suffices to show that $\WR(s_{*}(1_{X}))=\WR(s)_{*}(1_{\WR(X)})$ viewed as an element in $H_{\WR(X)}^{2n\cdot d,n\cdot d}(\WR(E),\Q)$.

Note that the Weil restriction on cohomology preserves the unit. Indeed, for $1_X \in H^{0,0}(X)$, the class
$\prod_{\sigma\in G}(1_X)^\sigma$
is the unit on $\prod_{\sigma\in G} X^\sigma$, and hence its descended class is precisely $1_{R(X)}$. Here $X^\sigma$ denotes the scheme obtained by composing the structure morphism with the automorphism $\sigma:\Spec(L)\to \Spec(L)$ for $\sigma\in G$.

By construction, we have
\[
\bigl(\WR(s_*(1_X))\bigr)_L
=
\prod_{\sigma\in G} \bigl(s_*(1_X)\bigr)^\sigma,
\]
where $s_*^{\sigma}$ denotes the homomorphism on cohomology induced by the zero section embedding
$s^{\sigma}:X^{\sigma}\to E^{\sigma}$.

On the other hand, we have
\[
\bigl(\WR(s)_*(1_{\WR(X)})\bigr)_L
\cong
\prod_{\sigma\in G} s_*^{\sigma}(1_{X^{\sigma}}).
\]

By the base-change compatibility of Gysin morphisms for the Cartesian square obtained from \(s:X\hookrightarrow E\) by applying \(\sigma\), we have
\[
(s_*(1_X))^\sigma=s^\sigma_*(1_{X^\sigma}).
\] for every $\sigma\in G$. Therefore
\[
\prod_{\sigma\in G} \bigl(s_*(1_X)\bigr)^\sigma
=
\prod_{\sigma\in G} s_*^{\sigma}(1_{X^\sigma}).
\]

It follows that
\[
\bigl(\WR(s_*(1_X))\bigr)_L
=
\bigl(\WR(s)_*(1_{\WR(X)})\bigr)_L.
\]
Hence the theorem follows from the uniqueness of descent in our construction.\end{proof}


\bibliographystyle{acm}
\bibliography{Guangzhao}

\end{document}